\documentclass[11pt, oneside]{amsart}
\usepackage[utf8]{inputenc}

\usepackage{amsmath}%
\usepackage{amsfonts}%
\usepackage{amssymb}%
\usepackage{amsthm}
\usepackage{bm}
\usepackage{graphicx}
\usepackage{stmaryrd}
\usepackage{centernot} 
\usepackage{tikz}
\usepackage{wrapfig}
\usepackage{mathtools}

\usepackage{url}
\usepackage{geometry}
\usepackage{enumerate}
\usepackage{hyperref}

\theoremstyle{plain}
\newtheorem{theorem}{Theorem}[section]

\newtheorem{proposition}[theorem]{Proposition}
\newtheorem{problem}{Problem}

\theoremstyle{definition}
\newtheorem{definition}[theorem]{Definition}

\newtheorem{remark}[theorem]{Remark}

\newcommand{\abs}[1]{\vert #1 \vert}

\newcommand{\set}[1]{\{#1\}} 
\renewcommand{\restriction}{\mathord{\upharpoonright}}

\title{Complexity of codes for Ramsey positive sets}

\author{Allison Wang}
\address{Department of Mathematical Sciences, Carnegie Mellon University}
\email{ayw2@andrew.cmu.edu}

\begin{document}

\begin{abstract}
Sabok showed that the set of codes for $G_\delta$ Ramsey positive subsets of $[\omega]^\omega$ is $\mathbf{\Sigma}^1_2$-complete. We extend this result by providing sufficient conditions for the set of codes for $G_\delta$ Ramsey positive subsets of an arbitrary topological Ramsey space to be $\mathbf{\Sigma}^1_2$-complete.
\end{abstract}

\maketitle

\section{Introduction}
A well-known theorem of Ramsey states that given any $k < \omega$ and any $\mathcal{X} \subseteq [\omega]^k$, there is an infinite set $A \subseteq \omega$ such that $[A]^k \subseteq \mathcal{X}$ or $[A]^k \cap \mathcal{X} = \emptyset$. This result fails for $k = \omega$ when the Axiom of Choice is assumed. However, there is a topological characterization of which sets $\mathcal{X} \subseteq [\omega]^\omega$ satisfy a stronger related property. Recall that the {\it Ellentuck topology} on $[\omega]^\omega$ is generated by all sets of the form
$$[s, A] = \set{B \in [\omega]^\omega \mid s \sqsubseteq B \wedge B \subseteq A}$$
where $s \in [\omega]^{< \omega}$, $A \in [\omega]^\omega$, and $s \sqsubseteq B$ means that $s$ is an initial segment of $B$. A set $\mathcal{X} \subseteq [\omega]^\omega$ is {\it Ramsey} if for all non-empty $[s, A]$, there is $B \in [s, A]$ such that $[s, B] \subseteq \mathcal{X}$ or $[s, B] \cap \mathcal{X} = \emptyset$. A set is {\it Ramsey null} if the latter always holds, and {\it Ramsey positive} otherwise. Silver showed that every analytic set is Ramsey \cite{Silver}. In fact, Ellentuck proved a stronger result: a set is Ramsey iff it has the Baire property in the Ellentuck topology \cite{Ellentuck}. We discuss these definitions and analogous results in a more general setting in Section \ref{secBackground}.

Sabok proved that the set of codes for $G_\delta$ (i.e., $\mathbf{\Pi}^0_2$) Ramsey positive sets in $[\omega]^\omega$ is $\mathbf{\Sigma}^1_2$-complete \cite[Theorem 1]{Sabok}; see Section \ref{secBackground} for the relevant definitions. In the same paper, Sabok proved a general theorem about $\mathbf{\Sigma}^1_2$-complete sets that has been used in several recent proofs of $\mathbf{\Sigma}^1_2$-completeness \cite[Theorem 2]{Sabok}. For instance, Todor\v{c}evi\'{c} and Vidny\'{a}nszky showed that the set of closed subgraphs of the shift graph on $[\omega]^\omega$ that have finite Borel chromatic number is $\mathbf{\Sigma}^1_2$-complete \cite[Theorem 1.3]{TodorcevicVidnyanszky}. Brandt, Chang, Greb\'{i}k, Grunau, Rozho\v{n}, and Vidny\'{a}nszky showed a similar result for graphs of bounded degree: for any $d > 2$, the set of Borel acyclic $d$-regular graphs with Borel chromatic number at most $d$ is $\mathbf{\Sigma}^1_2$-complete \cite[Theorem 1.2]{onHomGraphs}. Finally, Thornton proved that a more general class of Borel constraint satisfaction problems is $\mathbf{\Sigma}^1_2$-complete, including several other examples from Borel combinatorics \cite[Theorem 1.7]{Thornton}.

To prove \cite[Theorem 1]{Sabok}, Sabok considered the set of codes corresponding to a universal $G_\delta$ set formed by viewing every $x \in 2^\omega$ as representing a countable sequence of trees. We show that we can find a continuous reduction from this set of codes to the analogous set of codes associated with a topological Ramsey space satisfying axioms specified by Todor\v{c}evi\'{c} in \cite{Ramsey} whenever the space is sufficiently similar to $[\omega]^\omega$. We call such spaces {\it well-indexed}; see Definition \ref{properties}.
\begin{theorem}\label{mainThm}
    Suppose $(\mathcal{R}, \leq, r)$ satisfies A.1-A.4 from \cite[Section 5.1]{Ramsey}, $\mathcal{R}$ is closed, $\mathcal{AR}$ is countable, and $(\mathcal{R}, \leq, r)$ is well-indexed. Then the set of codes for $G_\delta$ Ramsey positive subsets of $\mathcal{R}$ is $\mathbf{\Sigma}^1_2$-complete.
\end{theorem}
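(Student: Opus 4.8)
The plan is to establish the two halves separately: that the set of codes is $\mathbf{\Sigma}^1_2$, and that it is $\mathbf{\Sigma}^1_2$-hard. The common starting point is the abstract Ellentuck theorem of Todor\v{c}evi\'{c}, which under A.1--A.4 guarantees that every Borel, and in particular every $G_\delta$, subset of $\mathcal{R}$ is Ramsey. Since a $G_\delta$ set is automatically Ramsey, being Ramsey positive reduces to failing to be Ramsey null on some basic open set, and combining this with the Ramsey property yields the clean characterization
\[
\mathcal{X} \text{ is Ramsey positive} \iff \exists s \in \mathcal{AR}\ \exists A \in \mathcal{R}\ \big([s,A] \neq \emptyset \wedge [s,A] \subseteq \mathcal{X}\big).
\]
I would prove this equivalence first, since it drives everything that follows: if $[s,B]\subseteq\mathcal{X}$ then $\mathcal{X}$ meets every $[s,C]$ with $C\in[s,B]$, so $\mathcal{X}$ is not Ramsey null there; conversely, any basic set on which $\mathcal{X}$ is not null must, by the Ramsey property, contain a basic set inside $\mathcal{X}$.

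For the upper bound I would simply read off the complexity of the right-hand side. The quantifier $\exists s \in \mathcal{AR}$ ranges over a \emph{countable} set, so it contributes nothing beyond a countable join; the quantifier $\exists A \in \mathcal{R}$ is a single existential real quantifier, and since $\mathcal{R}$ is \emph{closed} the predicates ``$A \in \mathcal{R}$'' and ``$B \in [s,A]$'' are Borel in the relevant codes. The inner condition $[s,A] \subseteq \mathcal{X}$ unfolds as $\forall B\,(B \in [s,A] \to B \in \mathcal{X})$; membership in the $G_\delta$ set $\mathcal{X}$ is $\mathbf{\Pi}^0_2$ uniformly in the code, so this inner condition is $\mathbf{\Pi}^1_1$. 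Prefixing the existential real quantifier yields $\mathbf{\Sigma}^1_2$.

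For hardness I would build a continuous reduction from Sabok's $\mathbf{\Sigma}^1_2$-complete set of codes for $G_\delta$ Ramsey positive subsets of $[\omega]^\omega$. Recall that Sabok's witnessing set comes from a universal $G_\delta$ set obtained by reading $x \in 2^\omega$ as a countable sequence of trees. The point of the well-indexed hypothesis is to transport this construction into $\mathcal{R}$: it supplies, coherently across $\mathcal{AR}$, an indexing of the one-step extensions of each approximation by $\omega$, and hence a tree-preserving copy of the data used to code a $G_\delta$ subset of $[\omega]^\omega$ inside the combinatorial skeleton of $\mathcal{R}$. Using this indexing I would define, for each $x$, a $G_\delta$ set $\mathcal{Y}_x \subseteq \mathcal{R}$ whose code depends continuously on $x$ and which mirrors the section of Sabok's universal set, so that the reduction is $x \mapsto \mathrm{code}(\mathcal{Y}_x)$. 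It then remains to verify that $\mathcal{Y}_x$ is Ramsey positive exactly when Sabok's section is.

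I expect this last verification to be the crux. The forward direction, transporting a witness $[s,B] \subseteq \mathcal{G}_x$ in $[\omega]^\omega$ to a witness in $\mathcal{R}$ along the copy, should be routine. The reverse direction is delicate: an abstract witness $[s', A'] \subseteq \mathcal{Y}_x$ in $\mathcal{R}$ need not lie inside the image of the copy, and the richer supply of infinite members of $\mathcal{R}$ guaranteed by the axioms could in principle manufacture positivity with no counterpart in $[\omega]^\omega$. The well-indexed hypothesis must be precisely strong enough to exclude this, i.e.\ to let any such witness be pruned back to one supported on the indexed copy, so that Ramsey positivity is genuinely reflected. Pinning down that faithfulness, and checking that the pigeonhole (Nash--Williams) combinatorics underlying A.1--A.4 respect the indexing, is where the main work lies.
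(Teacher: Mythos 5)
Your upper-bound argument is correct and coincides with the paper's (Proposition \ref{sigma12}): by the abstract Ellentuck theorem every $G_\delta$ set is Ramsey, so Ramsey positivity is equivalent to containing a non-empty $[s,A]$, and since $\mathcal{AR}$ is countable this unfolds to one existential real quantifier over a $\mathbf{\Pi}^1_1$ matrix, hence $\mathbf{\Sigma}^1_2$. The genuine gap is in the hardness half, which is the actual content of the theorem: you state the strategy (reduce from Sabok's $\mathbf{\Sigma}^1_2$-complete set of codes for $[\omega]^\omega$) but you never define the reduction, never state the transfer lemma it rests on, and explicitly defer the positivity-preservation argument as ``where the main work lies.'' That deferred verification is precisely the paper's Proposition \ref{sigma12complete}, so what you have is a plan, not a proof.

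Moreover, the plan as oriented runs into the obstruction you yourself identify, and the paper's construction is set up exactly to avoid it. You read the well-indexing hypothesis as supplying an indexing of one-step extensions and hence a tree-preserving \emph{copy} of the $[\omega]^\omega$-coding apparatus inside $\mathcal{R}$; you then worry that a witness $[s,A] \subseteq \mathcal{Y}_x$ need not meet the image of that copy. Under an embedding-style construction this worry is fatal. But Definition \ref{properties} does not embed $[\omega]^\omega$ into $\mathcal{R}$; it provides a \emph{projection} $\overline{\ell}: \mathcal{R} \to [\omega]^\omega$ (onto $[\overline{\ell}(A^*)]^\omega$, which one normalizes to $[\omega]^\omega$), and the reduction pulls Sabok's trees \emph{back} along $\ell$: one sets $s \in f(X) \iff \ell(s) \in X$, so that $A \in [f(X)] \iff \overline{\ell}(A) \in [X]$. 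With this orientation there is no ``image'' to fall outside of: given any witness $[s,A]$ disjoint from every $[f(X_n)]$, one projects it to $(\ell(s), \overline{\ell}(A))$, and the selection property, in its iterated form (Remark \ref{remProp4}), makes $\overline{\ell}$ map $[s,A]$ onto $[\ell(s), \overline{\ell}(A)]$, so any $B' \in [\ell(s),\overline{\ell}(A)] \cap [X_n]$ lifts to a point of $[s,A] \cap [f(X_n)]$, a contradiction. In the other direction, compatibility together with selection lets one realize a Sabok witness $(s',A')$ as $(\ell(s), \overline{\ell}(A))$ with $A \leq A^*$, and compatibility is what forces every $B \in [s,A]$ to project into $[s',A']$. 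So the missing step is not a routine tightening of your sketch but a reorientation of it: the four properties of a well-indexing are used to project $\mathcal{R}$ onto the Ellentuck space and transfer codes by preimage, not to transplant the Ellentuck space's codes into $\mathcal{R}$.
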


It turns out that many topological Ramsey spaces satisfy the conditions of this result. We present a few examples in Section \ref{secExamples}.

\section{Acknowledgments}
Thank you to Clinton Conley, Natasha Dobrinen, and Aristotelis Panagiotopoulos for helpful discussions, and to the referee for many useful suggestions. The author was supported by the National Science Foundation Graduate Research Fellowship Program under Grant No.~DGE2140739, and by the ARCS Foundation.

\section{Background}\label{secBackground}
Throughout, we will let $(\mathcal{R}, \leq, r)$ denote a triple where $\mathcal{R}$ is a non-empty set, $\leq$ is a reflexive and transitive relation on $\mathcal{R}$, and $r: \mathcal{R} \times \omega \to \mathcal{AR}$ is a map into a set of finite approximations of $\mathcal{R}$. We assume that $\mathcal{AR}$ is countable. In addition, we assume that $(\mathcal{R}, \leq, r)$ satisfies axioms A.1, A.2, A.3, and A.4 from \cite[Section 5.1]{Ramsey} and that $\mathcal{R}$ is closed as a subspace of $\mathcal{AR}^\omega$, where $\mathcal{AR}$ is given the discrete topology. Unless otherwise stated, we use $s, t, \ldots$ to denote elements of $\mathcal{AR}$ and $A, B, \ldots$ to denote elements of $\mathcal{R}$. We write $\mathcal{R}$ for the triple $(\mathcal{R}, \leq, r)$ when clear from context.

We write $r_n(A)$ to mean $r(A, n)$ for $A \in \mathcal{R}$ and $n < \omega$. For $s \in \mathcal{AR}$ and $A \in \mathcal{R}$, define $s \sqsubseteq A$ iff $s = r_n(A)$ for some $n < \omega$. For $s, t \in \mathcal{AR}$, define $s \sqsubseteq t$ iff there are $A \in \mathcal{R}$ and $m \leq n < \omega$ such that $s = r_m(A)$ and $t = r_n(A)$.

We make use of two different topologies on $\mathcal{R}$. We call the topology induced as a subspace of $\mathcal{AR}^\omega$ the {\it metrizable topology}. Unless otherwise specified, all topological notions are taken to be in the metrizable topology. Note that the metrizable topology on $\mathcal{R}$ is Polish since $\mathcal{AR}$ is countable and $\mathcal{R}$ is closed. Let
$$[s, A] := \set{B \in \mathcal{R} \mid s \sqsubseteq B \wedge B \leq A}.$$
We call the topology generated by all $[s, A]$ the {\it Ellentuck topology}.

A set $\mathcal{X} \subseteq \mathcal{R}$ is {\it Ramsey} if for every $[s, A] \neq \emptyset$, there is $B \in [s, A]$ such that $[s, B] \subseteq \mathcal{X}$ or $[s, B] \cap \mathcal{X} = \emptyset$. We say that $\mathcal{X}$ is {\it Ramsey null} if for every $[s, A] \neq \emptyset$, there is $B \in [s, A]$ such that $[s, B] \cap \mathcal{X} = \emptyset$. If $\mathcal{X}$ is not Ramsey null, then we say that $\mathcal{X}$ is {\it Ramsey positive}. Note that a Ramsey positive set need not be Ramsey in general.

Todor\v{c}evi\'{c} proved the following general connection between the Ellentuck topology and these Ramsey-theoretic notions \cite[Theorem 5.4]{Ramsey}:
\begin{theorem}[Todor\v{c}evi\'{c}]
    If $(\mathcal{R}, \leq, r)$ satisfies axioms A.1-A.4 from \cite[Section 5.1]{Ramsey} and $\mathcal{R}$ is closed as a subset of $\mathcal{AR}^\omega$, then every subset of $\mathcal{R}$ with the Baire property in the Ellentuck topology is Ramsey, and every Ellentuck meager set is Ramsey null.
\end{theorem}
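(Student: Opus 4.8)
The plan is to run the standard combinatorial-forcing proof of the abstract Ellentuck theorem, extracting everything from the axioms A.1--A.4, the countability of $\mathcal{AR}$, and the closedness of $\mathcal{R}$. Fix $\mathcal{X} \subseteq \mathcal{R}$. For $s \in \mathcal{AR}$ and $A \in \mathcal{R}$ with $[s,A] \neq \emptyset$, I would say that $A$ \emph{accepts} $s$ if $[s,A] \subseteq \mathcal{X}$, that $A$ \emph{rejects} $s$ if no $B \leq A$ with $[s,B] \neq \emptyset$ accepts $s$, and that $A$ \emph{decides} $s$ if it accepts or rejects $s$. The first task is to record the soft facts about this relation: acceptance and rejection both pass to $\leq$-smaller sets, and every $A$ admits some $B \leq A$ deciding a prescribed $s$. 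These follow from the finitization axiom A.2 together with the amalgamation axiom A.3.

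The core step is the base case: every set $\mathcal{X}$ that is open in the Ellentuck topology is Ramsey. The two engines are a \emph{decision lemma} and the pigeonhole axiom A.4. The decision lemma produces $B_0 \in [s,A]$ deciding every $t \le_{\mathrm{fin}} B_0$; since $\mathcal{AR}$ is countable this is a fusion over an enumeration of $\mathcal{AR}$, each step shrinking the current witness by A.3 while using that decisions are preserved downward. If $B_0$ accepts $s$ we are done. Otherwise $B_0$ rejects $s$, and A.4 propagates rejection: it yields $B_1 \in [s,B_0]$ rejecting every immediate extension of $s$ realized inside $B_1$, and iterating along a decreasing sequence produces $B \in [s,A]$ — which lies in $\mathcal{R}$ precisely because $\mathcal{R}$ is closed in $\mathcal{AR}^\omega$ — rejecting every finite extension of $s$. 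Then $[s,B] \cap \mathcal{X} = \emptyset$: if $X \in [s,B] \cap \mathcal{X}$, Ellentuck-openness gives a basic $[t,C]$ with $X \in [t,C] \subseteq \mathcal{X}$ and $s \sqsubseteq t \sqsubseteq X$, and amalgamating $C$ below $B$ via A.3 produces $D \leq B$ with $[t,D] \subseteq \mathcal{X}$, so $D$ accepts $t$; but $D \leq B$ and $B$ rejects $t$, a contradiction.

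For the meager conclusion I would first show that the Ramsey null sets form a $\sigma$-ideal: given Ramsey null $\mathcal{X}_0, \mathcal{X}_1, \dots$ and $[s,A] \neq \emptyset$, a fusion that kills $\mathcal{X}_n$ at the $n$-th stage (staying inside $[s,A]$ by A.3, taking the limit in $\mathcal{R}$ by closedness) produces $B \in [s,A]$ with $[s,B] \cap \bigcup_n \mathcal{X}_n = \emptyset$. Next, an Ellentuck-nowhere-dense set $N$ is Ramsey null: the complement of its Ellentuck closure is dense and Ellentuck-open, so applying the open base case to that complement inside any $[s,A]$ and invoking density yields $B \in [s,A]$ with $[s,B]$ contained in the complement, hence disjoint from $N$. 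Since every Ellentuck-meager set is a countable union of nowhere-dense sets, the $\sigma$-ideal property gives the second conclusion, that Ellentuck-meager sets are Ramsey null.

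Finally, for the Baire-property conclusion, write a set $\mathcal{X}$ with the Ellentuck Baire property as $\mathcal{X} = \mathcal{U} \mathbin{\triangle} \mathcal{M}$ with $\mathcal{U}$ Ellentuck-open and $\mathcal{M}$ Ellentuck-meager. Given $[s,A] \neq \emptyset$, first use that $\mathcal{M}$ is Ramsey null to find $C \in [s,A]$ with $[s,C] \cap \mathcal{M} = \emptyset$, so that $\mathcal{X}$ and $\mathcal{U}$ agree throughout $[s,C]$; then apply the open base case to $\mathcal{U}$ inside $[s,C]$ to obtain $B \in [s,C]$ with $[s,B] \subseteq \mathcal{U}$ or $[s,B] \cap \mathcal{U} = \emptyset$, which transfers to $\mathcal{X}$ since they coincide on $[s,C] \supseteq [s,B]$. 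I expect the main obstacle to be the base case and its fusion bookkeeping: organizing the alternating applications of A.3 and A.4 so that decisions made at earlier stages are never undone, meeting the countably many demands along a single decreasing sequence whose approximations stabilize, and confirming that closedness of $\mathcal{R}$ is exactly what legitimizes passing to the fusion limit and that this limit realizes the intended $[s,B]$. Once the base case and the $\sigma$-ideal property are established, the combination step is essentially formal.
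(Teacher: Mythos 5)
The paper itself gives no proof of this statement: it is imported as a black box, namely Todor\v{c}evi\'{c}'s abstract Ellentuck theorem \cite[Theorem 5.4]{Ramsey}, and is used downstream only to conclude that analytic Ramsey positive sets contain some non-empty $[s,A]$. Your sketch is a faithful reconstruction of the standard combinatorial-forcing proof from that cited source --- accept/reject/decide with downward heredity, a decision fusion using countability of $\mathcal{AR}$ and A.3, propagation of rejection via the pigeonhole axiom A.4, closedness of $\mathcal{R}$ in $\mathcal{AR}^\omega$ to legitimize fusion limits, then nowhere dense $\Rightarrow$ Ramsey null, the $\sigma$-ideal property (where, as you anticipate, the bookkeeping must handle all finitely many bases $t$ of depth at most $n$ at stage $n$ rather than a single application at base $s$, so that the approximations stabilize), and the formal Baire-property combination --- so it is correct in approach and matches the argument the paper relies on.
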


Every analytic set has the Baire property in the Ellentuck topology and is therefore Ramsey \cite[Corollary 11]{Ellentuck}. So for every analytic $\mathcal{X} \subseteq \mathcal{R}$, we have that $\mathcal{X}$ is Ramsey positive iff $\mathcal{X}$ contains some $[s, A] \neq \emptyset$.

Recall that $([\omega]^\omega, \subseteq, r)$ satisfies the assumptions above, where $r_n$ is the function mapping an element of $[\omega]^\omega$ to the set of its least $n$ elements. We take $s', t', \ldots$ to denote elements of $[\omega]^{< \omega}$ and $A', B', \ldots$ to denote elements of $[\omega]^\omega$ unless stated otherwise.

Let $\mathbf{\Gamma}$ be a pointclass, $\mathcal{A} \subseteq \mathbf{\Gamma}(X)$, and $U \subseteq \omega^\omega \times X$ a universal $\mathbf{\Gamma}$ set. We call $\set{x \in \omega^\omega \mid U_x \in \mathcal{A}}$ the {\it set of codes for $\mathcal{A}$ in $X$}. We will primarily be interested in the set of codes for $G_\delta$ Ramsey positive sets in a topological Ramsey space $\mathcal{R}$ as above.

\section{The set of codes for \texorpdfstring{$G_\delta$}{G delta} Ramsey positive sets is \texorpdfstring{$\mathbf{\Sigma}^1_2$}{Sigma 1 2}}\label{secCalc}

Following \cite{Sabok}, we define a universal $G_\delta$ set $G \subseteq 2^\omega \times \mathcal{R}$ by interpreting each $x \in 2^\omega$ as a code for a sequence of closed sets. Since $\mathcal{AR}$ is countable, we can view each $x \in 2^\omega$ as coding a sequence $\langle X_n \subseteq \mathcal{AR} \mid n < \omega \rangle$ in a uniform way. If every $X_n$ is a tree with respect to $\sqsubseteq$, then let
\begin{equation*}
    G_x = \mathcal{R} \setminus \bigcup_{n < \omega} [X_n],
\end{equation*}
where $[X_n] = \set{A \in \mathcal{R} \mid \forall k < \omega~r_k(A) \in X_n}$. Otherwise, let $G_x = \emptyset$. Then every $G_\delta$ subset of $\mathcal{R}$ is realized as a section of $G$. Define
\begin{equation*}
    C := \set{x \in 2^\omega \mid G_x \text{ is Ramsey positive}}.
\end{equation*}

\begin{proposition}\label{sigma12}
    The set $C$ is $\mathbf{\Sigma}^1_2.$
\end{proposition}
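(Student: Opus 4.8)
The plan is to reduce Ramsey positivity to a statement about basic Ellentuck neighborhoods and then count quantifiers. The excerpt already records that every analytic subset of $\mathcal{R}$ is Ramsey, and that an analytic $\mathcal{X}$ is Ramsey positive if and only if it contains some nonempty $[s,A]$. Since each $G_x$ is either $\emptyset$ or a $G_\delta$ set, it is in every case Borel, hence analytic, so this equivalence applies. Thus I would first establish
\[
x \in C \iff \exists s \in \mathcal{AR}\ \exists A \in \mathcal{R}\ \big([s,A] \neq \emptyset \wedge [s,A] \subseteq G_x\big).
\]
This formula is correct even when $G_x = \emptyset$ (the case where $x$ fails to code trees with respect to $\sqsubseteq$), because then no nonempty $[s,A]$ can be contained in $G_x$, so the right-hand side is false, matching the fact that $\emptyset$ is not Ramsey positive.

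Next I would verify that the constituent relations are Borel, uniformly in $x$, over the Polish space $\mathcal{R}$. Since $\mathcal{AR}$ is countable and $x$ codes $\langle X_n : n < \omega \rangle$ uniformly, membership ``$t \in X_n$'' is clopen in $(x,t)$; hence the predicate $T(x)$ asserting that every $X_n$ is $\sqsubseteq$-downward closed is closed, and for $B \in \mathcal{R}$ the relation $B \in [X_n]$, i.e.\ $\forall k\ r_k(B) \in X_n$, is closed in $(x,B)$. Consequently $B \in G_x$, which holds iff $T(x) \wedge \forall n\ B \notin [X_n]$, is $\mathbf{\Pi}^0_2$ and in particular Borel in $(x,B)$. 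The relation $s \sqsubseteq B$ is open, and by the finitization axiom A.2 one has $B \leq A$ iff $\forall n\, \exists m\ r_n(B) \leq_{\mathrm{fin}} r_m(A)$, so $\leq$ is $\mathbf{\Pi}^0_2$ and the relation $D \in [s,A]$ is Borel in $(s,A,D)$.

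With these ingredients the complexity count is routine. The condition $[s,A] \neq \emptyset$ is $\exists B\,(s \sqsubseteq B \wedge B \leq A)$, a $\mathbf{\Sigma}^1_1$ relation; and $[s,A] \subseteq G_x$ is $\forall D\,(D \in [s,A] \to D \in G_x)$, a $\mathbf{\Pi}^1_1$ relation, both with Borel matrices. Because the two conjuncts sit under the same $\exists A$, I can pull the quantifiers to the front:
\[
\exists A\,\big[\,(\exists B\ \psi_1) \wedge (\forall D\ \psi_2)\,\big] \iff \exists A\ \exists B\ \forall D\ (\psi_1 \wedge \psi_2),
\]
where $\psi_1, \psi_2$ are Borel, using that $\psi_1$ does not mention $D$ and that $\forall D\,\psi_2$ does not mention $B$. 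This prefix is $\exists\exists\forall$, i.e.\ $\mathbf{\Sigma}^1_2$, and taking the countable disjunction over $s \in \mathcal{AR}$ preserves $\mathbf{\Sigma}^1_2$. Hence $C$ is $\mathbf{\Sigma}^1_2$.

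The only place where the structure theory genuinely enters---and the step I would be most careful about---is the opening equivalence, which depends on every $G_x$ being analytic and therefore Ramsey; everything after that is bookkeeping. The remaining points requiring attention are purely a matter of confirming Borelness of the coded objects, most notably that $\leq$ is Borel, for which the finitization relation supplied by A.2 is exactly what is needed.
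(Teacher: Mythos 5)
Your proof is correct and follows essentially the same route as the paper's: both reduce Ramsey positivity of the analytic set $G_x$ to the existence of a nonempty basic Ellentuck neighborhood $[s,A] \subseteq G_x$ (via the Ellentuck/Todor\v{c}evi\'{c} theorem) and then count quantifiers to get a $\mathbf{\Sigma}^1_2$ prefix. The only differences are bookkeeping: the paper separates the closed ``every $X_n$ is a tree'' condition and witnesses nonemptiness by $s \sqsubseteq A$ rather than an extra $\exists B$, while you fold the tree condition into $G_x$ and make the Borelness of $\leq$ via A.2 explicit---a detail the paper glosses over.
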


\begin{proof}
Observe that for $x \in 2^\omega$ coding $\langle X_n \subseteq \mathcal{AR} \mid n < \omega \rangle$, we have $x \in C$ iff
\begin{enumerate}
    \item for all $n < \omega$, $X_n$ is a tree;
    \item $\mathcal{R} \setminus \bigcup_{n < \omega} [X_n]$ is Ramsey positive.
\end{enumerate}
Note that condition (1) is closed, so it suffices to verify that condition (2) is $\mathbf{\Sigma}^1_2$. Since $\mathcal{R} \setminus \bigcup_{n < \omega} [X_n]$ is analytic, condition (2) holds iff $\mathcal{R} \setminus \bigcup_{n < \omega} [X_n]$ contains some non-empty $[s, A]$; equivalently,
\begin{equation*}
    \exists s \in \mathcal{AR}~\exists A \in \mathcal{R} \big(s \sqsubseteq A \wedge \forall n < \omega~[s, A] \cap [X_n] = \emptyset \big).
\end{equation*}
Since $\mathcal{AR}$ is countable and $\mathcal{R}$ is a Polish space, this condition is $\mathbf{\Sigma}^1_2$. We conclude that $C$ is $\mathbf{\Sigma}^1_2$.
\end{proof}

\section{Conditions for \texorpdfstring{$\mathbf{\Sigma}^1_2$}{Sigma 1 2}-completeness}\label{secComplete}

Let $G \subseteq 2^\omega \times \mathcal{R}$ and $C \subseteq 2^\omega$ be as in Section \ref{secCalc}. By Proposition \ref{sigma12}, $C$ is $\mathbf{\Sigma}^1_2$. We present sufficient conditions for $C$ to be $\mathbf{\Sigma}^1_2$-complete.

Given any map $m: \mathcal{AR} \to \omega$, we define $\ell: \mathcal{AR} \to [\omega]^{< \omega}$ and $\overline{\ell}: \mathcal{R} \to [\omega]^{\leq \omega}$ by
$$\ell(s) := \set{m(t) \mid \emptyset \neq t \sqsubseteq s}$$
and
$$\overline{\ell}(A) := \set{m(t) \mid \emptyset \neq t \sqsubseteq A} = \bigcup_{n < \omega} \ell(r_n(A)).$$

\begin{definition} \label{properties}
    A topological Ramsey space $(\mathcal{R}, \leq, r)$ is {\it well-indexed} if there is a map $m: \mathcal{AR} \to \omega$ satisfying the following properties:
    \begin{enumerate}
        \item (Monotonicity) For all $s \sqsubseteq t$, $m(s) \leq m(t)$.\label{p1}
        \item (Unboundedness) For all $A$, $\overline{\ell}(A) \in [\omega]^\omega$.\label{p2}
        \item (Compatibility with $\leq$) There exists $A^*$ such that $\overline{\ell}(A^*)$ is maximal (i.e., $\overline{\ell}(A) \subseteq \overline{\ell}(A^*)$ for all $A$) and for all $B \leq A \leq A^*$, we have $\overline{\ell}(B) \subseteq \overline{\ell}(A)$.\label{p3}
        \item (Selection) For any $s \sqsubseteq A$ and $B' \in [\ell(s), \overline{\ell}(A)]$ with $\abs{\overline{\ell}(A) \setminus B'} = 1$, there is $B \in [s, A]$ such that $\overline{\ell}(B) = B'$. \label{p4}
    \end{enumerate}
\end{definition}

\begin{remark}\label{remProp4}
    Observe that in the presence of monotonicity, unboundedness, and compatibility, the selection property is equivalent to the following statement: 
    \begin{center}
        For any $s \sqsubseteq A$ and $B' \in [\ell(s), \overline{\ell}(A)]$, there is $B \in [s, A]$ such that $\overline{\ell}(B) = B'$.
    \end{center}
    Indeed, if $\overline{\ell}(A) \setminus B' = \set{p_i \mid i < \omega}$ with $p_0 < p_1 < \ldots$, we can construct $A \geq B_0 \geq B_1 \geq \ldots$ and $s \sqsubseteq s_0 \sqsubseteq s_1 \sqsubseteq \ldots$ such that $\overline{\ell}(B_i) = \overline{\ell}(A) \setminus \set{p_j \mid j \leq i}$, $s_i \sqsubseteq B_i$, and $\ell(s_i) = \overline{\ell}(B_i) \cap p_{i + 1}$. Then we can see that the limit $B := \lim_i B_i \in [s, A]$ exists and $\overline{\ell}(B) = B'$.
\end{remark}

Suppose $m: \mathcal{AR} \to \omega$ well-indexes $\mathcal{R}$ and $A^*$ witnesses that $m$ satisfies compatibility. Note that monotonicity implies $\ell(s) \sqsubseteq \ell(t)$ when $s \sqsubseteq t$, and $\ell(s) \sqsubseteq \overline{\ell}(A)$ when $s \sqsubseteq A$. In addition, we have $\ell[\mathcal{AR}] = [\overline{\ell}(A^*)]^{< \omega}$ and $\overline{\ell}[\mathcal{R}] = [\overline{\ell}(A^*)]^{\omega}$; compatibility and selection are key to this observation.

The following proposition, along with Proposition \ref{sigma12}, completes the proof of Theorem \ref{mainThm}.

\begin{proposition}\label{sigma12complete}
    Suppose $\mathcal{R}$ is well-indexed. Then $C$ is $\mathbf{\Sigma}^1_2$-hard.
\end{proposition}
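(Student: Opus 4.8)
The goal is to show $C$ is $\mathbf{\Sigma}^1_2$-hard, meaning every $\mathbf{\Sigma}^1_2$ set continuously reduces to $C$. Sabok's original result (\cite[Theorem 1]{Sabok}) establishes exactly this for the space $([\omega]^\omega, \subseteq, r)$. So the natural plan is to \emph{transfer} Sabok's reduction from $[\omega]^\omega$ to the abstract well-indexed space $\mathcal{R}$. Concretely, let $C'$ denote the analogue of $C$ for the space $[\omega]^\omega$ (the set of codes for $G_\delta$ Ramsey positive subsets of $[\omega]^\omega$), which is already known to be $\mathbf{\Sigma}^1_2$-complete. It suffices to build a continuous reduction $C' \leq C$; composing with Sabok's reduction then shows $C$ is $\mathbf{\Sigma}^1_2$-hard.

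The machinery for the transfer should be the map $\overline{\ell}: \mathcal{R} \to [\omega]^\omega$ (restricted to $A \leq A^*$, where it lands in $[\overline{\ell}(A^*)]^\omega \cong [\omega]^\omega$ after reindexing $\overline{\ell}(A^*)$ with $\omega$). The well-indexing axioms are precisely designed to make $\overline{\ell}$ behave like an isomorphism of Ramsey structures between $[s,A] \subseteq \mathcal{R}$ and $[\ell(s), \overline{\ell}(A)] \subseteq [\omega]^\omega$: monotonicity and compatibility guarantee $\overline{\ell}$ is order-preserving and well-defined on $\mathcal{R}\restriction A^*$, unboundedness ensures images are infinite, and selection (in the strengthened form of Remark \ref{remProp4}) guarantees surjectivity of $\overline{\ell}$ onto the basic Ellentuck sets downstairs, so that every $B' \in [\ell(s), \overline{\ell}(A)]$ is hit. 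First I would verify that $\overline{\ell}$ maps the basic open set $[s,A]$ (for $A \leq A^*$) \emph{onto} $[\ell(s), \overline{\ell}(A)]$; this is the content of the remark $\ell[\mathcal{AR}] = [\overline{\ell}(A^*)]^{<\omega}$ and $\overline{\ell}[\mathcal{R}] = [\overline{\ell}(A^*)]^\omega$.

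Given a code $y \in 2^\omega$ for a $G_\delta$ set $H \subseteq [\omega]^\omega$, I would produce a code $x = \Phi(y) \in 2^\omega$ for a $G_\delta$ set $G_x \subseteq \mathcal{R}$ such that $H$ is Ramsey positive in $[\omega]^\omega$ if and only if $G_x$ is Ramsey positive in $\mathcal{R}$. The natural candidate is the pullback $G_x = \overline{\ell}^{-1}(H) \cap (\mathcal{R}\restriction A^*)$, suitably presented as a $G_\delta$ set via trees on $\mathcal{AR}$. Writing $H = [\omega]^\omega \setminus \bigcup_n [T_n]$ for trees $T_n$ on $[\omega]^{<\omega}$, I would set $X_n := \ell^{-1}(T_n) = \{ s \in \mathcal{AR} : \ell(s) \in T_n \}$ (intersected with approximations of elements below $A^*$, so that the code restricts attention to $\mathcal{R}\restriction A^*$). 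Monotonicity ensures each $X_n$ is a $\sqsubseteq$-tree, so $G_x$ is genuinely a $G_\delta$ section of the universal set, and $A \in [X_n]$ iff $\overline{\ell}(A) \in [T_n]$; hence $G_x = \overline{\ell}^{-1}(H)$ on $\mathcal{R}\restriction A^*$. The continuity of $y \mapsto x$ is routine since $\ell$ and the coding are all determined by finite data. For correctness: if $H \supseteq [s', B']$ for some nonempty basic Ellentuck set, I use surjectivity of $\overline{\ell}$ together with selection to pull back $[s', B']$ to some $[s, B] \subseteq G_x$ with $B \leq A^*$, giving Ramsey positivity of $G_x$; conversely, if $G_x \supseteq [s, B]$ then $\overline{\ell}[s,B] = [\ell(s), \overline{\ell}(B)] \subseteq H$ shows $H$ is Ramsey positive.

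The main obstacle I anticipate is the correctness argument in the direction taking a witness $[s,B] \subseteq G_x$ \emph{upstairs} and the bookkeeping of restricting everything to $A^*$. Two subtleties deserve care. First, Ramsey positivity downstairs is witnessed by an arbitrary basic set $[s', B']$ with $B' \in [\omega]^\omega$, and I must produce a genuine $B \leq A^*$ in $\mathcal{R}$ with $\overline{\ell}(B) = B'$ and $\ell(s) = s'$ for an appropriate $s \sqsubseteq B$; this is exactly where the strengthened selection property of Remark \ref{remProp4} is used, and I must check that the relevant $B'$ indeed lies in $[\overline{\ell}(A^*)]^\omega$ (which holds after we confine $H$'s witnesses to this set, or by first composing with a homeomorphism $[\omega]^\omega \cong [\overline{\ell}(A^*)]^\omega$). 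Second, I must confirm that confining $G_x$ to $\mathcal{R}\restriction A^*$ does not spuriously create or destroy Ramsey positivity — since $[s, A^*] \neq \emptyset$ and Ramsey positivity only requires a single witnessing basic set, restricting to $A^*$ is harmless provided the reduction faithfully encodes $H$ on the range of $\overline{\ell}$. Handling these two points, and verifying that the strengthened selection of Remark \ref{remProp4} together with compatibility genuinely yields the surjectivity $\overline{\ell}[\mathcal{R}\restriction A^*] = [\overline{\ell}(A^*)]^\omega$, is where the real work lies; the rest is a direct translation of Sabok's argument through the dictionary supplied by $\overline{\ell}$.
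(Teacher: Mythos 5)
Your skeleton is the paper's own proof: invoke Sabok's theorem for $[\omega]^\omega$, arrange $\overline{\ell}(A^*)=\omega$, and reduce $C'$ to $C$ by the continuous map sending a code for trees $\langle T_n \mid n < \omega\rangle$ to the code for the pulled-back trees $\ell^{-1}(T_n)$, with compatibility plus (strengthened) selection handling one direction and selection alone the other. If you deleted every mention of restricting to $\mathcal{R}\restriction A^* := \set{B \in \mathcal{R} \mid B \leq A^*}$, what remains is exactly the paper's argument. The genuine gap is in that restriction step, which is both wrongly implemented and capable of destroying the reduction. Intersecting each tree $\ell^{-1}(T_n)$ with the tree $X^*$ of approximations of elements $\leq A^*$ shrinks the closed sets and hence \emph{enlarges} the coded $G_\delta$ set: with $X_n := \ell^{-1}(T_n)\cap X^*$ one gets $G_{\varphi(x)} = \mathcal{R}\setminus\bigcup_{n<\omega}[X_n] \supseteq \mathcal{R}\setminus[X^*]$, so your code describes, in effect, the union $\overline{\ell}^{-1}(H)\cup\bigl(\mathcal{R}\setminus\mathcal{R}\restriction A^*\bigr)$ rather than the intersection $\overline{\ell}^{-1}(H)\cap\bigl(\mathcal{R}\restriction A^*\bigr)$ you intend. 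To genuinely cut $G_{\varphi(x)}$ down to $\mathcal{R}\restriction A^*$ you would have to \emph{add} fixed trees $Y_k$ with $\bigcup_{k<\omega}[Y_k] = \mathcal{R}\setminus\mathcal{R}\restriction A^*$ to the union (possible since $\mathcal{R}\restriction A^*$ is closed and open sets are $F_\sigma$), not shrink the existing ones; your write-up does neither.

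This is not a cosmetic slip, because $\mathcal{R}\setminus\mathcal{R}\restriction A^*$ can be Ramsey positive. Take the paper's own example $W_{Lv}^{[\infty]}$ with $A^* = \langle v^{2^n}\rangle_{n<\omega}$, fix a letter $l \in L$, and let $A = \langle v\,l\,v^{j_n}\rangle_{n<\omega}$ with $j_n$ growing fast enough to satisfy the length condition. Every word of every $B \leq A$ lies in $[A]_{Lv}\cap W_{Lv}$ and hence contains the substring $vlv$, i.e.\ an isolated occurrence of $l$; but no element of $[A^*]_{Lv}$ contains an isolated letter after position $0$, since such a letter would have to form a block of size $2^0$ and only the first block can have size $1$. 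So no nonempty approximation of any $B \leq A$ extends to an element $\leq A^*$, giving $[\emptyset, A]\cap[X^*] = \emptyset$. Now take $H = \emptyset$, coded by $T_n = [\omega]^{<\omega}$ for all $n$, so $x \notin C'$: your construction yields $X_n = X^*$ and $G_{\varphi(x)} = \mathcal{R}\setminus[X^*] \supseteq [\emptyset, A] \neq \emptyset$, which is Ramsey positive, so $\varphi(x)\in C$ and $\varphi$ is not a reduction. The repair is to drop the restriction altogether, as the paper does: the upstairs-to-downstairs direction needs only the selection property, which holds for \emph{arbitrary} $s \sqsubseteq A$ (not just $A \leq A^*$) and produces, for each $B' \in [\ell(s), \overline{\ell}(A)]$, some $B \in [s,A]$ with $\overline{\ell}(B) = B'$; compatibility and $A^*$ are needed only in the downstairs-to-upstairs direction, where the witness $[s', A']$ is lifted to some $s \sqsubseteq A \leq A^*$ with $\ell(s) = s'$ and $\overline{\ell}(A) = A'$, so that $B \leq A \leq A^*$ forces $\overline{\ell}(B) \in [s', A']$.
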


\begin{proof}
Let $H \subseteq 2^\omega \times [\omega]^\omega$ be a universal $G_\delta$ set constructed in the same way as $G$. By \cite[Theorem 1]{Sabok}, the set $C' := \set{x \in 2^\omega \mid H_x \text{ is Ramsey positive}}$ is $\mathbf{\Sigma}^1_2$-complete. Thus, it suffices to construct a continuous reduction from $C'$ to $C$.

Let $m: \mathcal{AR} \to \omega$ well-index $\mathcal{R}$. We may assume that for $A^*$ witnessing compatibility, $\overline{\ell}(A^*) = \omega$: if not, let $g: \overline{\ell}(A^*) \to \omega$ be an order-preserving bijection, and consider $m': \mathcal{AR} \to \omega$ defined by $m'(\emptyset) := 0$ and $m'(s) := g(m(s))$ if $s \neq \emptyset$. Then $m'$ well-indexes $\mathcal{R}$, and our original $A^*$ witnesses compatibility with $\overline{\ell'}(A^*) = \omega$.

We define a map $f: \mathcal{P}([\omega]^{< \omega}) \to \mathcal{P}(\mathcal{AR})$ as follows. Given $X \subseteq [\omega]^{< \omega}$, define $f(X) \subseteq \mathcal{AR}$ by
$$s \in f(X) \iff \ell(s) \in X$$
for all $s \in \mathcal{AR}$. Note that $X$ is a tree with respect to $\sqsubseteq$ iff $f(X)$ is a tree with respect to $\sqsubseteq$, using monotonicity and the surjectivity of $\ell$. Moreover, in the case that $X$ and $f(X)$ are trees, the definition of $\overline{\ell}$, monotonicity, and unboundedness yield
\begin{equation}\label{branches}
    A \in [f(X)] \iff \overline{\ell}(A) \in [X]\tag{$*$}
\end{equation}
for all $A \in \mathcal{R}$.

Define the map $\varphi: 2^\omega \to 2^\omega$ such that if $x \in 2^\omega$ codes the sequence $\langle X_n \subseteq [\omega]^{< \omega} \mid n < \omega \rangle$, then $\varphi(x)$ codes the sequence $\langle f(X_n) \subseteq \mathcal{AR} \mid n < \omega \rangle$. Note that $\varphi$ is continuous. We claim that $\varphi$ is a reduction from $C'$ to $C$.

First, suppose $x \in C'$ and $x$ codes $\langle X_n \subseteq [\omega]^{< \omega} \mid n < \omega \rangle$. Since $x \in C'$, we must have that each $X_n$ is a tree and $[\omega]^\omega \setminus \bigcup_{n < \omega} [X_n]$ is Ramsey positive. Then each $f(X_n)$ is a tree, so
$$G_{\varphi(x)} = \mathcal{R} \setminus \bigcup_{n < \omega} [f(X_n)].$$
Fix some $s' \sqsubseteq A'$ such that $[s', A'] \cap [X_n] = \emptyset$ for every $n$. By compatibility and selection, we can find $s \sqsubseteq A \leq A^*$ such that $\ell(s) = s'$ and $\overline{\ell}(A) = A'$. To prove that $G_{\varphi(x)}$ is Ramsey positive, it suffices to show $[s, A] \cap [f(X_n)] = \emptyset$ for every $n$. Suppose for some $n < \omega$, we can find $B \in [s, A] \cap [f(X_n)]$. Let $B' := \overline{\ell}(B)$. Note that $B' \in [X_n]$ by (\ref{branches}). We have $s' \sqsubseteq B'$ and $B' \subseteq A'$ by monotonicity and compatibility, so $B' \in [s', A']$. But this contradicts that  $[s', A'] \cap [X_n] = \emptyset$. So we conclude $[s, A] \cap [f(X_n)] = \emptyset$ for each $n$, hence $G_{\varphi(x)}$ is Ramsey positive and $\varphi(x) \in C$.

Now suppose we have $x \in 2^\omega$ such that $\varphi(x) \in C$ and $x$ codes $\langle X_n \subseteq [\omega]^{< \omega} \mid n < \omega \rangle$. Since $\varphi(x) \in C$, every $f(X_n)$ is a tree and $\mathcal{R} \setminus \bigcup_{n < \omega} [f(X_n)]$ is Ramsey positive. So we have that every $X_n$ is a tree and
$$H_x = [\omega]^\omega \setminus \bigcup_{n < \omega} [X_n].$$
Fix $s \sqsubseteq A$ such that $[s, A] \cap [f(X_n)] = \emptyset$ for all $n < \omega$. Let $s' := \ell(s)$ and $A' := \overline{\ell}(A)$. Then $s' \sqsubseteq A'$ by monotonicity. We claim $[s', A'] \cap [X_n] = \emptyset$ for all $n < \omega$. Suppose for some $n < \omega$, we can find $B' \in [s', A'] \cap [X_n]$. By selection, there is $B \in [s, A]$ such that $\overline{\ell}(B) = B'$. By (\ref{branches}), we have $B \in [f(X_n)]$ since $B' \in [X_n]$. But this contradicts that $[s, A] \cap [f(X_n)] = \emptyset$. We conclude that $H_x$ is Ramsey positive, thus $x \in C'$.

So $\varphi$ is a continuous reduction from $C'$ to $C$. Therefore, $C$ is $\mathbf{\Sigma}^1_2$-hard.
\end{proof}

\section{Examples}\label{secExamples}
We present a few examples of topological Ramsey spaces where Theorem \ref{mainThm} shows that the set of codes for $G_\delta$ Ramsey positive sets is $\mathbf{\Sigma}^1_2$-complete. For each, we exhibit a map $m$ that well-indexes the space.

\subsection{Ellentuck space}
The proof of Proposition \ref{sigma12complete} relies on Sabok's result that the set of codes for $G_\delta$ Ramsey positive sets of $[\omega]^\omega$ is $\mathbf{\Sigma}^1_2$-complete, so showing that $[\omega]^\omega$ is well-indexed is superfluous. We nevertheless provide such a map for the sake of illustration.

Define $m: [\omega]^{< \omega} \to \omega$ by $m(\emptyset) := 0$ and $m(s) := \max(s)$ for $s \neq \emptyset$. Then $\ell(s) = s$ and $\overline{\ell}(A) = A$ for all $s \in [\omega]^{< \omega}$ and $A \in [\omega]^\omega$. It is clear that monotonicity and unboundedness hold. Letting $A^* = \omega$, we see that compatibility holds as well. Finally, selection holds since we can pick $B = B'$. So $m$ well-indexes $[\omega]^\omega$.

\subsection{Strong subtrees of \texorpdfstring{$2^{< \omega}$}{2 <omega}}
Let $2^{< \omega}$ denote the complete binary tree. We say that $A \subseteq 2^{< \omega}$ is a {\it strong subtree} of $2^{< \omega}$ if there exists a set of levels $I \subseteq \omega$ such that
\begin{enumerate}
    \item $A \subseteq \bigcup_{i \in I} 2^i$;
    \item if $\set{i_k \mid k < \abs{I}}$ is the increasing enumeration of $I$, $\abs{A \cap 2^{i_k}} = 2^k$ for each $k < \abs{I}$;
    \item if $i \in I$, $i'= \min \set{n \in I \mid n > i}$, $a \in A \cap 2^i$, and $j \in \set{0, 1}$, then there is exactly one $b \in A \cap 2^{i'}$ extending $a^\frown (j)$.
\end{enumerate}
Let $\mathcal{S}_\infty(2^{< \omega})$ denote the set of all infinite strong subtrees of $2^{< \omega}$. If $\set{i_k \mid k < \omega}$ is the increasing enumeration of the levels of $A$, define $r_n(A) = A \cap \bigcup_{k < n} 2^{i_k}$ for each $n < \omega$. Then $(\mathcal{S}_\infty(2^{< \omega}), \subseteq, r)$ is a topological Ramsey space; see \cite{Milliken} and \cite[Section 6.1]{Ramsey} for details.

Let $\mathcal{S}_{< \infty}(2^{< \omega}) = \mathcal{A} \mathcal{S}_\infty(2^{< \omega})$ denote the set of finite strong subtrees of $2^{< \omega}$. Define $m: \mathcal{S}_{< \infty}(2^{< \omega}) \to \omega$ by
\begin{equation*}
    m(s) := \begin{cases}
        0, & \quad \text{if } s = \emptyset \\
        \max \set{n \mid s \cap 2^n \neq \emptyset}, & \quad \text{otherwise.}
    \end{cases}
\end{equation*}
Observe that for any $s \in \mathcal{S}_{< \infty}(2^{< \omega})$ and $A \in \mathcal{S}_{\infty}(2^{< \omega})$, $\ell(s) = \set{n \mid s \cap 2^n \neq \emptyset}$ and $\overline{\ell}(A) = \set{n \mid A \cap 2^n \neq \emptyset}$ are the corresponding sets of levels. Monotonicity and unboundedness are clear, and compatibility holds with $A^* = 2^{< \omega}$. Selection also holds: intuitively, finding an appropriate $B \in [s, A]$ given $B' = \overline{\ell}(A) \setminus \set{k} \in [\ell(s), \overline{\ell}(A)]$ amounts to thinning the nodes in $A$ at levels above $k$.

Thus, $\mathcal{S}_{\infty}(2^{< \omega})$ is well-indexed, and Theorem \ref{mainThm} implies that the set of codes for $G_\delta$ Ramsey positive subsets of $\mathcal{S}_{\infty}(2^{< \omega})$ is $\mathbf{\Sigma}^1_2$-complete.

\subsection{Infinite sequences of words with variables}
Let $L = \bigcup_{n < \omega} L_n$ be a set, where each $L_n$ is finite and $L_n \subseteq L_{n + 1}$. Let $v \notin L$. Define $W_{Lv}$ to be the set of all words---non-empty finite strings of elements from $L \cup \set{v}$---in which $v$ appears. Define
$$W_{Lv}^{[\infty]} := \set{\langle a_n \rangle_{n < \omega} \mid \forall n < \omega~\big(a_n \in W_{Lv} \wedge \abs{a_n} > \sum_{i < n} \abs{a_i} \big)}.$$
Given $A \in W_{Lv}^{[\infty]}$, let
$$[A]_{Lv} := \set{a_{n_0}[\lambda_0]^\frown \ldots^\frown a_{n_k}[\lambda_k] \in W_{Lv} \mid n_0 < \ldots < n_k \wedge \forall 0 \leq i \leq k~\lambda_i \in L_{n_i} \cup \set{v}},$$
where $a[\lambda]$ denotes the string obtained by replacing every instance of $v$ in $a$ with $\lambda$. If $a = a_{n_0}[\lambda_0]^\frown \ldots^\frown a_{n_k}[\lambda_k] \in [A]_{Lv}$, define
$$\text{supp}_A(a) := \set{n_0, \ldots, n_k}.$$
Note that the condition on the lengths of the words in $A$ guarantees that $\text{supp}_A(a)$ is well-defined.

For $A = \langle a_n \rangle_{n < \omega}, B = \langle b_n \rangle_{n < \omega} \in W_{Lv}^{[\infty]}$, define $A \leq B$ iff 
\begin{enumerate}
    \item for all $n < \omega$, $a_n \in [B]_{Lv}$;
    \item for all $k < n$, $\max(\text{supp}_B(a_k)) < \min(\text{supp}_B(a_n))$.
\end{enumerate}
For $k < \omega$ and $A = \langle a_n \rangle_{n < \omega} \in W_{Lv}^{[\infty]}$, define $r_k(A) := \langle a_n \rangle_{n < k}$. Then $(W_{Lv}^{[\infty]}, \leq, r)$ is a topological Ramsey space; see \cite{Carlson} and \cite[Section 5.3]{Ramsey} for details.

Let $W_{Lv}^{[< \infty]} := \mathcal{A}W_{Lv}^{[\infty]}$. Define $m: W_{Lv}^{[< \infty]} \to \omega$ by $m(\emptyset) := 0$ and $m(\langle a_i \rangle_{i < n}) := \lfloor \log_2 \abs{a_{n - 1}} \rfloor$ if $n \geq 1$. It is clear that monotonicity and unboundedness are satisfied. We can see that compatibility holds if we let $A^* = \langle a_n^* \rangle_{n < \omega}$, where $a_n^*$ is the word $vv \ldots v$ consisting of $v$ exactly $2^n$ times. We remark that although we have $\overline{\ell}(B) \subseteq \overline{\ell}(A)$ when $B \leq A \leq A^*$, this is not true for all $B \leq A$. Finally, to show selection, suppose $s \sqsubseteq A$ and $B' = [\ell(s), \overline{\ell}(A)]$ with $\overline{\ell}(A) \setminus B' = \set{k}$. If $A = \langle a_n \rangle_{n < \omega}$, we can form $B \in [s, A]$ with $\overline{\ell}(B) = B'$ by setting $B = \langle a_n \mid n < \omega, \lfloor \log_2 \abs{a_n} \rfloor \neq k \rangle$.

We conclude that $W_{Lv}^{[< \infty]}$ is well-indexed, hence the set of codes for $G_\delta$ Ramsey positive subsets of $W_{Lv}^{[< \infty]}$ is $\mathbf{\Sigma}^1_2$-complete by Theorem \ref{mainThm}.

\subsection{High-dimensional Ellentuck spaces}\label{Ek}
For $k \geq 2$, we define the topological Ramsey space $\mathcal{E}_k$ as in \cite{Dobrinen}. Let $\omega^{\not\downarrow \leq k}$ denote the set of all non-decreasing sequences of natural numbers of length at most $k$, that is,
$$\omega^{\not\downarrow \leq k} = \set{\langle u_0, u_1, \ldots, u_{p - 1} \rangle \mid 0 \leq p \leq k \wedge u_0 \leq u_1 \leq \ldots \leq u_{p - 1}}.$$
Let $\omega^{\not\downarrow k}$ denote the set of all non-decreasing sequences of natural numbers of length exactly $k$.

Define the well-order $\prec$ on $\omega^{\not\downarrow \leq k}$ such that
\begin{enumerate}
    \item $\langle \rangle$ is the $\prec$-minimum;
    \item for $\langle u_0, \ldots, u_{p - 1} \rangle, \langle v_0, \ldots, v_{q - 1} \rangle \in \omega^{\not\downarrow \leq k}$ with $p, q > 0$, $\langle u_0, \ldots, u_{p - 1} \rangle \prec \langle v_0, \ldots, v_{q - 1} \rangle$ iff either
    \begin{enumerate}
        \item $u_{p - 1} < v_{q - 1}$, or
        \item $u_{p - 1} = v_{q - 1}$ and $\langle u_0, \ldots, u_{p - 1} \rangle <_{\text{lex}} \langle v_0, \ldots, v_{q - 1} \rangle$.
    \end{enumerate}
\end{enumerate}
Then with respect to $\prec$, $\omega^{\not\downarrow \leq k}$ has order type $\omega$. For $a < \omega$, let $\vec{j}_a$ denote the $a^{\text{th}}$ element of $(\omega^{\not\downarrow \leq k}, \prec)$. Given any $\vec{u} \in \omega^{\not\downarrow \leq k}$, define $a_{\vec{u}}$ to be the unique $a < \omega$ such that $\vec{u} = \vec{j}_a$. For $b < \omega$, let $\vec{i}_b$ denote the $b^{\text{th}}$ element of $\omega^{\not\downarrow k}$ with respect to the order $\prec$, inherited from $\omega^{\not\downarrow \leq k}$.

We define a function $\widehat{W_k}: \omega^{\not\downarrow \leq k} \to [\omega]^{\leq k}$ as follows. For $\vec{u} \in \omega^{\not\downarrow \leq k}$ with $\abs{\vec{u}} = p$, define $\widehat{W_k}(\vec{u}) := \set{a_{\vec{u} \restriction q} \mid 1 \leq q \leq p}$. Note that
$$a_{\vec{u} \restriction 1} < a_{\vec{u} \restriction 2} < \ldots < a_{\vec{u} \restriction p} = a_{\vec{u}},$$
so $\widehat{W_k}(\vec{u}) \in [\omega]^p$. Let $W_k := \widehat{W_k} \restriction \omega^{\not\downarrow k}$. Define
$$\mathbb{W}_k := W_k[\omega^{\not\downarrow k}] \subseteq [\omega]^k$$
and
$$\widehat{\mathbb{W}_k} := \widehat{W_k}[\omega^{\not\downarrow \leq k}] \subseteq [\omega]^{\leq k}.$$
Observe that $\widehat{\mathbb{W}_k}$ is the tree formed from all initial segments of elements of $\mathbb{W}_k$.

A function $\widehat{A}: \omega^{\not\downarrow \leq k} \to \widehat{\mathbb{W}_k}$ is an {\it $\mathcal{E}_k$-tree} if it satisfies the following conditions:
\begin{enumerate}
    \item for all $a < \omega$, $\widehat{A}(\vec{j}_a) \in [\omega]^{\abs{\vec{j}_a}}$; \label{Ek_p1}
    \item for $1 \leq a < \omega$, $\max(\widehat{A}(\vec{j}_a)) < \max(\widehat{A}(\vec{j}_{a + 1}))$; \label{Ek_p2}
    \item for $a, b < \omega$, $\widehat{A}(\vec{j}_a) \sqsubseteq \widehat{A}(\vec{j}_b)$ iff $\vec{j}_a \sqsubseteq \vec{j}_b$. \label{Ek_p3}
\end{enumerate}
Given an $\mathcal{E}_k$-tree $\widehat{A}$, define
$$[\widehat{A}] := \widehat{A} \cap (\omega^{\not\downarrow k} \times \mathbb{W}_k) = \widehat{A} \restriction \omega^{\not\downarrow k}.$$
Define
$$\mathcal{E}_k := \set{[\widehat{A}] \mid \widehat{A} \text{ is an } \mathcal{E}_k \text{-tree}}.$$
Note that condition (\ref{Ek_p2}) guarantees that every $A \in \mathcal{E}_k$ is uniquely determined by $A[\omega^{\not\downarrow k}] = \set{A(\vec{i}_n) \mid n < \omega}$. Furthermore, there is a unique $\mathcal{E}_k$-tree $\widehat{A}$ for which $[\widehat{A}] = A$ by conditions (\ref{Ek_p1}) and (\ref{Ek_p3}). For $A, B \in \mathcal{E}_k$, we define $A \leq B$ iff $A[\omega^{\not\downarrow k}] \subseteq B[\omega^{\not\downarrow k}].$ For $n < \omega$ and $A \in \mathcal{E}_k$, let $r_n(A) := A \restriction \set{\vec{i}_p \mid p < n}$. By \cite[Theorem 3.17]{Dobrinen}, $(\mathcal{E}_k, \leq, r)$ is a topological Ramsey space.

Where convenient, we will write $\widehat{A}(u_0, u_1, \ldots, u_{p - 1})$ instead of $\widehat{A}(\langle u_0, u_1, \ldots, u_{p - 1} \rangle)$ below. Define $m: \mathcal{A}\mathcal{E}_k \to \omega$ as follows. Let $m(\emptyset) := 0$. Given $s$ with domain $\set{\vec{i}_n \mid n < p}$ for some $p > 0$, let $M_s := \max \set{M \mid \exists n < p~\langle M \rangle \sqsubseteq \vec{i}_n}$. Note that there must be $n_s < p$ such that $\vec{i}_{n_s} = \langle M_s, M_s, \ldots, M_s \rangle$. Define $m(s) := \min(s(\vec{i}_{n_s}))$.

We check that $m$ well-indexes $\mathcal{E}_k$. To see monotonicity, note that if $\emptyset \neq s \sqsubseteq t$, then we have $M_s \leq M_t$, $\langle M_s \rangle \preceq \langle M_t \rangle$, and $s(\vec{i}_{n_s}) = t(\vec{i}_{n_s})$. Fix any $A \in \mathcal{E}_k$ with $t \sqsubseteq A$. Then
$$m(s) = \min(t(\vec{i}_{n_s})) = \max(\widehat{A}(M_s)) \leq \max(\widehat{A}(M_t)) = \min(t(\vec{i}_{n_t})) = m(t),$$
so monotonicity holds.

It is not difficult to see that $\overline{\ell}(A) = \set{\max(\widehat{A}(n)) \mid n < \omega}$ for all $A \in \mathcal{E}_k$. Since $\max(\widehat{A}(0)) < \max(\widehat{A}(1)) < \ldots$ by condition (\ref{Ek_p2}), we have $\overline{\ell}(A) \in [\omega]^\omega$, hence unboundedness is satisfied.

We next verify compatibility. If we set $A^* = W_k$, then $m(A^*)$ is maximal. It suffices to verify that $B \leq A \leq A^*$ implies $\overline{\ell}(B) \subseteq \overline{\ell}(A)$. Note that
$$\overline{\ell}(B) = \set{\max(\widehat{B}(n)) \mid n < \omega} \subseteq \set{\max(\widehat{A}(n)) \mid n < \omega} = \overline{\ell}(A)$$
whenever $B \leq A$, so $m$ satisfies compatibility.

Finally, we show that $m$ satisfies selection. Consider $s \sqsubseteq A$ and $B' \in [\ell(s), \overline{\ell}(A)]$ with $\overline{\ell}(A) \setminus B' = \set{x}$. Fix $u < \omega$ such that $\widehat{A}(u) = \set{x}$. Define $\widehat{B}: \omega^{\not\downarrow \leq k} \to \widehat{\mathbb{W}_k}$ as follows. Consider any $\langle u_0, u_1, \ldots, u_{p - 1} \rangle \in \omega^{\not\downarrow \leq k}$. For $q < p$, define
$$v_q := \begin{cases}
    u_q, & \quad \text{if } u_q \leq u \text{ and } u_0 \neq u \\
    u_q + 1, & \quad \text{otherwise}.
\end{cases}$$
Let $\widehat{B}(u_0, u_1, \ldots, u_{p - 1}) := \widehat{A}(v_0, v_1, \ldots, v_{p - 1})$. We can verify that $\widehat{B}$ is an $\mathcal{E}_k$-tree, so we have $B := [\widehat{B}] \in \mathcal{E}_k$. By definition of $B$, we have $B \leq A$. Note that since $x > \max(\ell(s))$ and $B(\vec{i}) = A(\vec{i})$ for all $\vec{i} \in \omega^{\not\downarrow k}$ with $\vec{i} \prec \langle u \rangle$, we must have $s \sqsubseteq B$. So $B \in [s, A]$. Since $\widehat{B}(v) = \widehat{A}(v)$ for all $v < u$ and $\widehat{B}(v) = \widehat{A}(v + 1)$ for $v \geq u$, we have
$$\overline{\ell}(B) = \overline{\ell}(A) \setminus \set{\max(\widehat{A}(u))} = \overline{\ell}(A) \setminus \set{x} = B'.$$

Thus, $\mathcal{E}_k$ is well-indexed, hence Theorem \ref{mainThm} applies.

\subsection{Other examples}
We provide several other examples to which Theorem \ref{mainThm} applies. We exhibit a map $m$ well-indexing each space, using the notation from the indicated reference. The verification that these functions satisfy the required properties is omitted.

\begin{enumerate}
    \item $\text{FIN}_k^{[\infty]}$ as defined in \cite[Section 5.2]{Ramsey}: Define $m: \text{FIN}_k^{[< \infty]} \to \omega$ by $m(\emptyset) := 0$ and $m((p_i)_{i < n}) := \min \set{j \mid p_{n - 1}(j) = k}$ for $n \geq 1$.
    \item $\text{FIN}_{Lv}^{[\infty]}$ as defined in \cite[Section 5.3]{Ramsey}: Define $m: \text{FIN}_{Lv}^{[< \infty]} \to \omega$ by $m(\emptyset) := 0$ and $m((x_i)_{i < n}) := \min \set{j \mid x_{n - 1}(j) = v}$ for $n \geq 1$.
    \item $\mathcal{E}_\infty$ as defined in \cite[Section 5.6]{Ramsey}: Define $m: \mathcal{AE}_\infty \to \omega$ by $m(\emptyset) := 0$ and $m(r_n(E)) := p_n(E)$ if $n \geq 1$.
    \item $\mathcal{M}_\infty$ as defined in \cite[Section 5.7]{Ramsey}: Define $m: \mathcal{AM}_\infty \to \omega$ by $m(\emptyset) := 0$ and $m(r_n(A)) := p_{n - 1}(A)$ if $n \geq 1$.
    \item $\mathcal{R}_1$ as defined in \cite{DobrinenTodorcevic}: Define $m: \mathcal{AR} \to \omega$ by $m(\emptyset) := 0$ and $m(a) := k_{n - 1}$ for $a \in \mathcal{AR}_n$ with $n \geq 1$ and $a(i) \subseteq \mathbb{T}(k_i)$ for each $i < n$.
\end{enumerate}

There are several generalizations of spaces mentioned above that we have not been able to prove are well-indexed. In \cite{DobrinenInf}, Dobrinen defined spaces $\mathcal{E}_B$ for uniform barriers on $\omega$ as an extension of the spaces $\mathcal{E}_k$ from \cite{Dobrinen}. We suspect that a map similar to the $m$ defined in Section \ref{Ek} may well-index $\mathcal{E}_B$; however, we have not been able to prove it. Likewise, we do not know if we can apply Theorem \ref{mainThm} to the spaces $\mathcal{R}_\alpha$, $\alpha < \omega_1$, defined in \cite{DobrinenTodorcevic2} as an extension of the space $\mathcal{R}_1$ defined in \cite{DobrinenTodorcevic}.

\section{Further questions}
For all the topological Ramsey spaces we have considered so far, we either have a map that well-indexes the space or have a plausible candidate map. We ask the following related questions.

\begin{problem}
    Are all topological Ramsey spaces well-indexed?
\end{problem}

\begin{problem}
    Is there a topological Ramsey space whose set of codes for $G_\delta$ Ramsey positive sets is not $\mathbf{\Sigma}^1_2$-complete?
\end{problem}

In particular, we are interested to know whether a map $m: \mathcal{AR} \to \omega$ well-indexing $\mathcal{R}$ can be produced solely using the assumptions A.1-A.4 from \cite[Section 5.1]{Ramsey}.

\bibliographystyle{alpha}
\bibliography{bibliography}

\end{document}